  	\newcommand{\Z}{\ensuremath{\mathbb{Z}}}
\theoremstyle{plain}
\newtheorem*{NewDefinition}{Definition}
\newtheorem*{NewTheoremA}{Theorem A}
\newtheorem*{NewTheoremB}{Theorem B}
\newtheorem*{NewPropositionC}{Proposition C}
\newtheorem{theorem}{Theorem}[section]
\newtheorem{lemma}[theorem]{Lemma}
\newtheorem{corollary}[theorem]{Corollary}
\newtheorem{example}[theorem]{Example}
\newtheorem{proposition}[theorem]{Proposition}
\newtheorem{definition}[theorem]{Definition}
\title{On coherence of graph products and Coxeter groups}
\author{Olga Varghese}
\date{\today}
\address{Olga Varghese\\
Department of Mathematics\\
M\"unster University\\ 
Einsteinstra\ss e 62\\
48149 M\"unster (Germany)}
\email{olga.varghese@uni-muenster.de}
\begin{document}

\pagenumbering{arabic}
\begin{abstract}
We study coherence of graph products and Coxeter groups and obtain many results in this direction. 
\end{abstract}
\maketitle

\section{Introduction}
This article is located in the area of group theory. One interesting algebraic property of groups is the coherence. A group is called coherent if every finitely generated subgroup is finitely presented. Classical examples of coherent groups are free groups and free abelian groups. The standard example of an incoherent group is the direct product $F_2\times F_2$ of two free groups.
We are interested in understanding which graph products, Artin and Coxeter groups are coherent. More precisely, let $\Gamma=(V,E)$ be a finite simplicial non-empty graph with vertex set $V$ and edge set $E$. A vertex labeling on $\Gamma$  is a map $\varphi:V\rightarrow \left\{\text{non-trivial finitely generated abelian groups}\right\}$ and an edge labeling on $\Gamma$ is a map $\psi:E\rightarrow\mathbb{N}-\left\{0,1\right\}$. A graph $\Gamma$ with a vertex and edge labeling is called a vertex-edge-labeled graph.
\begin{NewDefinition}
Let $\Gamma$ be a vertex-edge-labeled graph. 
\begin{enumerate}
\item[(i)] If $\psi(E)=\left\{2\right\}$, then $\Gamma$ is called a graph product graph. The graph product $G(\Gamma)$ is the group obtained from the free product of
the $\varphi(v)$, by adding the commutator relations $[g,h] = 1$ for all $g\in \varphi(v)$, $h\in\varphi(w)$ such that $\left\{v,w\right\}\in E$.
\item[(ii)] If $\varphi(V)=\left\{\Z\right\}$, then $\Gamma$ is called an Artin graph and the corresponding Artin group $A(\Gamma)$ is given by the presentation
\[
A(\Gamma)=\langle V\mid \underbrace{vwvw\ldots}_{\psi(\left\{v,w\right\})-\text{letters}}=\underbrace{wvwv\ldots}_{\psi(\left\{v,w\right\})-\text{letters}}\text{ if }\left\{v,w\right\}\in E\rangle
\]
If $\Gamma$ is an Artin graph and $\psi(E)=\left\{2\right\}$, then $\Gamma$ is called a right angled Artin graph and the Artin group $A(\Gamma)$ is called right angled Artin group.
\item[(iii)] If $\varphi(V)=\left\{\Z_2\right\}$, then $\Gamma$ is called a Coxeter graph and the corresponding Coxeter group $C(\Gamma)$ is given by the presentation
\[
C(\Gamma)=\langle V\mid v^2, (vw)^{\psi(\left\{v, w\right\})}\text{ if }\left\{v,w\right\}\in E\rangle
\]
If $\Gamma$ is a Coxeter graph and $\psi(E)=\left\{2\right\}$, then $\Gamma$ is called a right angled Coxeter graph and the Coxeter group $C(\Gamma)$ is called right angled Coxeter group.
\end{enumerate}
\end{NewDefinition}
The first examples to consider are the extremes. If $\Gamma$ is a discrete vertex-edge-labeled graph, then $G(\Gamma)$ is a free product of finitely generated abelian groups. In particular, if $\Gamma$ is a discrete Artin graph with $n$ vertices, then $A(\Gamma)$ is the free group $F_n$ of rank $n$. On the other hand, if $\Gamma$ is a complete graph product graph, then $G(\Gamma)$ is a finitely generated abelian group. The corresponding Coxeter group of the following Coxeter graph is the symmetric group ${\rm Sym}(5)$ and the corresponding Artin group of the following Artin graph is the braid group $B_5$ on $5$ strands .

\begin{figure}[h]
\begin{center}
\begin{tikzpicture}[scale=0.75, transform shape]
\draw[fill=black]  (0,0) circle (1pt);
\draw[fill=black]  (2,0) circle (1pt);
\draw[fill=black]  (2,2) circle (1pt);
\draw[fill=black]  (0,2) circle (1pt);
\draw (0,0)--(2,0);
\draw (0,0)--(0,2);
\draw (2,0)--(2,2);
\draw (0,2)--(2,2);
\draw (0,0)--(2,2);
\draw (2,0)--(0,2);

\node at (0,-0.25) {$\Z_2$}; 
\node at (2,-0.25) {$\Z_2$}; 
\node at (0, 2.25) {$\Z_2$}; 
\node at (2,2.25) {$\Z_2$}; 

\node at (1,-0.25) {$3$}; 
\node at (1, 2.25) {$3$}; 
\node at (2.25, 1) {$3$}; 
\node at (-0.25,1) {$2$}; 
\node at (0.5, 0.8) {$2$}; 
\node at (1.5,0.8) {$2$};

\draw[fill=black]  (4,0) circle (1pt);
\draw[fill=black]  (6,0) circle (1pt);
\draw[fill=black]  (6,2) circle (1pt);
\draw[fill=black]  (4,2) circle (1pt);
\draw (4,0)--(6,0);
\draw (4,0)--(4,2);
\draw (6,0)--(6,2);
\draw (4,2)--(6,2);
\draw (4,0)--(6,2);
\draw (6,0)--(4,2);

\node at (4,-0.25) {$\Z$}; 
\node at (6,-0.25) {$\Z$}; 
\node at (4, 2.25) {$\Z$}; 
\node at (6,2.25) {$\Z$}; 

\node at (5,-0.25) {$3$}; 
\node at (5, 2.25) {$3$}; 
\node at (6.25, 1) {$3$}; 
\node at (3.74,1) {$2$}; 
\node at (4.5, 0.8) {$2$}; 
\node at (5.5,0.8) {$2$};

\end{tikzpicture}
\end{center}
\end{figure}

It is natural to ask which graph products, Artin and Coxeter groups are coherent. For right angled Artin groups this has been answered by Droms \cite{Droms}: A right angled Artin group $A(\Gamma)$ is coherent iff $\Gamma$ has no induced cycle of length $>3$. 
We show that Droms arguments can be extended to a much larger class of groups. 
\begin{NewTheoremA}
Let $\Gamma$ be a graph product graph. If $\Gamma$  has no induced cycle of length $>3$, then $G(\Gamma)$ is coherent.
\end{NewTheoremA}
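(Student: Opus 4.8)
The plan is to argue by induction on the number $|V|$ of vertices of $\Gamma$, following and extending Droms' treatment of right angled Artin groups. Three ingredients are needed: Dirac's structure theorem for graphs with no induced cycle of length $>3$ (the \emph{chordal} graphs), the compatibility of the graph product construction with decompositions of the underlying graph, and the stability of coherence under amalgamation over finitely generated abelian subgroups. For the base case $|V|=1$, the group $G(\Gamma)=\varphi(v)$ is finitely generated abelian, hence coherent --- in fact Noetherian, every subgroup being finitely generated. The same holds when $\Gamma$ is complete, since then $G(\Gamma)\cong\prod_{v\in V}\varphi(v)$ is finitely generated abelian; I record this separately, because the groups $G(K)$ attached to complete subgraphs $K\subseteq\Gamma$ will appear as edge groups below.

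Now suppose $|V|\ge 2$ and $\Gamma$ is not complete. Every induced subgraph of $\Gamma$ is again chordal, and for a full subgraph $\Lambda\subseteq\Gamma$ the subgroup $G(\Lambda)\le G(\Gamma)$ is the graph product on $\Lambda$ with the restricted labeling, so the inductive hypothesis applies to proper induced subgraphs. If $\Gamma$ is disconnected, write $\Gamma=\Gamma_1\sqcup\Gamma_2$ as a disjoint union of nonempty induced subgraphs; then $G(\Gamma)\cong G(\Gamma_1)*G(\Gamma_2)$, both factors are coherent by induction, and by the Kurosh subgroup theorem a finitely generated subgroup of this free product is a free product of a finitely generated free group with finitely many finitely generated subgroups of conjugates of $G(\Gamma_1)$ and $G(\Gamma_2)$, each of which is finitely presented --- so $G(\Gamma)$ is coherent.

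If instead $\Gamma$ is connected but not complete, it contains two non-adjacent vertices, so by Dirac's theorem it possesses a minimal vertex separator $S$, and chordality forces $\Gamma[S]$ to be complete. Partitioning the components of $\Gamma\setminus S$ into two nonempty families and adjoining $S$ to each yields induced subgraphs $\Gamma_1,\Gamma_2$ with $\Gamma=\Gamma_1\cup\Gamma_2$, $\Gamma_1\cap\Gamma_2=\Gamma[S]$ and $|V(\Gamma_i)|<|V|$. The compatibility of graph products with such a splitting gives $G(\Gamma)\cong G(\Gamma_1)*_{G(\Gamma[S])}G(\Gamma_2)$, an amalgam of the groups $G(\Gamma_1),G(\Gamma_2)$ (coherent by induction) over the finitely generated abelian group $G(\Gamma[S])$.

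The remaining ingredient --- and the step I expect to be the main obstacle --- is the amalgamation lemma: if $A$ and $B$ are coherent and $C$ is finitely generated abelian (more generally Noetherian, e.g. polycyclic), then $A*_C B$ is coherent. One proceeds via Bass--Serre theory: a finitely generated $H\le A*_C B$ either is conjugate into $A$ or $B$, hence finitely presented, or acts on the Bass--Serre tree of the splitting with a unique minimal invariant subtree $T_H$ such that $H\backslash T_H$ is finite (because $H$ is finitely generated) and every edge stabilizer, being a subgroup of a conjugate of the Noetherian group $C$, is finitely generated. One then wants that the vertex stabilizers --- finitely generated subgroups of conjugates of $A$ or $B$ --- are finitely presented, so that $H$, as the fundamental group of a finite graph of finitely presented groups with finitely generated edge groups, is finitely presented. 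The delicate point is that when $H\backslash T_H$ has loops the vertex stabilizers need not be finitely generated a priori, and controlling these HNN-type pieces, using that their edge groups lie in conjugates of the polycyclic $C$, is precisely where Droms' argument must be reproved and extended. Granting this lemma the induction closes, and Theorem~A follows; equivalently one may record it as the single statement that the fundamental group of a finite graph of coherent groups with finitely generated abelian edge groups is coherent, since iterating the clique-separator decomposition exhibits $G(\Gamma)$, for chordal $\Gamma$, as such a fundamental group with vertex groups $G(K)$ ranging over the maximal cliques $K$.
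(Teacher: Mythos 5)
Your proposal is correct and follows essentially the same route as the paper: induction on $\#V$, Kurosh/free products for the disconnected case, Dirac's clique-separator decomposition of chordal graphs for the connected non-complete case, and an amalgam $G(\Gamma_1)*_{G(\Gamma_1\cap\Gamma_2)}G(\Gamma_2)$ over the finitely generated abelian group of a complete subgraph. The one step you flag as the ``main obstacle,'' that an amalgam of coherent groups over a slender (Noetherian) subgroup is coherent, is not reproved in the paper but is exactly the cited theorem of Karrass and Solitar \cite[Theorem 8]{Karrass}, so your argument closes once you invoke that reference instead of attempting the Bass--Serre analysis yourself.
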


A right angled Artin group $A(\Gamma)$ where $\Gamma$ has a shape of a cycle of length $>3$ is by Droms result incoherent. Concerning arbitrary graph product $G(\Gamma)$ where $\Gamma$ has a shape of a cycle $>3$ this result does not hold. We prove in Theorem B, that a right angled Coxeter group which is defined via graph with a shape of a cycle of length $>3$ is always coherent.  Let us consider a graph product graph  $\Gamma=(V,E)$ with a shape of a cycle of length $4$ such that $\#\varphi(v)\geq 3$ for all $v\in V$, then $G(\Gamma)$ is incoherent. This follows from the observation, that $G(\Gamma)$ is the direct sum of the free product of opposite vertex groups $G(\Gamma)=(\varphi(v_1)*\varphi(v_3))\times(\varphi(v_2)*\varphi(v_4))$. Since the kernel of the canonical map $\varphi(v_1)*\varphi(v_3)\rightarrow\varphi(v_1)\times\varphi(v_3)$ is a free group of rank $\geq 2$, see \cite[I.1.3 Pr. 4]{Serre} it follows that $F_2\times F_2$ is a subgroup of $G(\Gamma)$. It is known that $F_2\times F_2$ is incoherent, hence $G(\Gamma)$ is incoherent.

Let $\Gamma=(V, E)$ be a graph product graph with a shape of a cycle of length $\geq 5$ such that $\infty>\#\varphi(v)\geq 3$ for all $v\in V$. We do not know if  $G(\Gamma)$ is coherent or not. We only know that $F_2\times F_2$ is not a subgroup of $G(\Gamma)$, because $G(\Gamma)$ is Gromov hyperbolic \cite{Hyperbolic} and it is known that Gromov hyperbolic groups do not contain a copy of $\Z\times\Z$. If $G(\Gamma)$ is incoherent, then we would have the same characterization for coherence of graph products of finite abelian vertex groups with cardinality $\geq 3$  as for right angled Artin groups. But if $G(\Gamma)$ is coherent, then the characterization would be more complicated. 

It was proven by Wise and Gordon \cite{Wise2}, \cite{Gordon} that an Artin group $A(\Gamma)$ is coherent iff 
$\Gamma$ has no induced cycle of length $>3$, 
every complete subgraph of $\Gamma$ with $3$ or $4$ vertices has at most one edge label $>2$
and $\Gamma$ has no induced subgraph of the following shape: 
\newpage
\begin{figure}[h]
\begin{center}
\begin{tikzpicture}[scale=0.75, transform shape]
\draw[fill=black]  (0,0) circle (1pt);
\draw[fill=black]  (2,0) circle (1pt);
\draw[fill=black]  (1,2) circle (1pt);
\draw[fill=black]  (1,-2) circle (1pt);
\draw (0,0)--(1,2);
\draw (0,0)--(2,0);
\draw (0,0)--(1, -2);
\draw (2,0)--(1,2);
\draw (2,0)--(1,-2);

\node at (0.25,1) {$2$}; 
\node at (1.75,1) {$2$}; 
\node at (1,0.2) {$m>2$}; 
\node at (0.25,-1) {$2$}; 
\node at (1.75,-1) {$2$}; 

\node at (-0.25,0) {$\Z$}; 
\node at (2.25,0) {$\Z$}; 
\node at (1,2.25) {$\Z$}; 
\node at (1,-2.25) {$\Z$}; 

\end{tikzpicture}
\end{center}
\end{figure}

Concerning coherence of Coxeter groups we found two results in the literature. A simple criteria for the coherence of Coxeter groups which depends only on the edge labeling and the number of generators was proven by McCammond and Wise
\cite[Theorem 12.2]{WiseCoxeter}: If $\Gamma=(V, E)$ is a Coxeter graph and $\psi(e)\geq \#V$ for all $e\in E$, then $C(\Gamma)$ is coherent. Further, Jankiewicz and Wise proved with probabilistic methods that many infinite Coxeter groups where the Coxeter graph is complete are incoherent  \cite[Theorem 1.2]{Jankiewicz}. 

We present two results regarding coherence of Coxeter groups.
\begin{NewTheoremB}
Let $\Gamma$ be a  Coxeter graph.
\begin{enumerate}
\item[(i)] If $\Gamma$ has no induced cycle of length $>3$ and every complete subgraph is that of a slender Coxeter group  (i.e. every subgroup is finitely generated), then $C(\Gamma)$ is coherent. In particular, if $\Gamma$ has a shape of a tree, then  $C(\Gamma)$ is coherent. 
\item[(ii)] If $\Gamma$ has a shape of a cycle of length $>3$, then $C(\Gamma)$ is coherent.
\end{enumerate}
\end{NewTheoremB}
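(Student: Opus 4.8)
The plan is to prove (i) by induction on the number of vertices, imitating the Droms‑type argument behind Theorem A, and to prove (ii) by recognising $C(\Gamma)$ as a virtual surface group through its action on the Davis complex. The engine for (i) is the auxiliary fact that \emph{the class of coherent groups is closed under passage to the fundamental group of a finite graph of groups with coherent vertex groups and slender edge groups}. I would prove this with Bass--Serre theory \cite{Serre}: if $G$ is such a fundamental group and $H\le G$ is finitely generated, then $H$ preserves the subtree of the Bass--Serre tree spanned by the $H$‑orbit of the finite union of geodesics from a base vertex to its images under a finite generating set, and the $H$‑action there is cocompact; hence $H$ is itself the fundamental group of a \emph{finite} graph of groups with edge groups inside conjugates of the edge groups of $G$ and vertex groups inside conjugates of the vertex groups of $G$. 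The edge groups of $H$ are slender, hence finitely generated, so by the standard fact that a finitely generated group expressed as a finite graph of groups with finitely generated edge groups has finitely generated vertex groups, the vertex groups of $H$ are finitely generated as well. They are then finitely presented, being finitely generated subgroups of coherent groups; the edge groups of $H$ are finitely presented too, because a slender Coxeter group is virtually abelian by the Tits alternative for Coxeter groups, hence virtually polycyclic, so all of its finitely generated subgroups are finitely presented. As the fundamental group of a finite graph of finitely presented groups with finitely presented edge groups is finitely presented, $H$ is finitely presented.

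With this in hand I induct on $\#V$. If $\Gamma$ is complete, then $C(\Gamma)$ is slender, hence virtually polycyclic, hence coherent. If $\Gamma$ is not complete then, having no induced cycle of length $>3$, it is chordal; if it is disconnected we split $C(\Gamma)$ as the free product of the Coxeter groups of its components, and otherwise we take a minimal vertex separator $\Gamma_0$, which is a clique by chordality, and write $\Gamma=\Gamma_1\cup\Gamma_2$ with $\Gamma_0=\Gamma_1\cap\Gamma_2$, both $\Gamma_i$ proper induced subgraphs and no edge joining $\Gamma_1\setminus\Gamma_0$ to $\Gamma_2\setminus\Gamma_0$; the defining presentation then gives the visual decomposition $C(\Gamma)=C(\Gamma_1)\ast_{C(\Gamma_0)}C(\Gamma_2)$. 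In either case the smaller graphs are again chordal with all complete subgraphs among those of $\Gamma$ (hence slender), so by induction their Coxeter groups are coherent, $C(\Gamma_0)$ is slender, and the auxiliary fact (or, for the free product, its trivial‑edge‑group case) completes the step. For the final clause of (i): a tree has only complete subgraphs on at most two vertices, whose Coxeter groups are $\Z_2$ or finite dihedral groups -- finite, hence slender -- so the hypothesis holds.

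For (ii), write $\Gamma=C_n$ with $n\ge 4$ and (automatically finite) edge labels. The nerve $L$ of $(C(\Gamma),V)$ has a simplex on $T\subseteq V$ exactly when $C(T)$ is finite, and since $C_n$ is triangle‑free with finite edge labels these $T$ are precisely $\emptyset$, the vertices and the edges of $C_n$; thus $L=C_n$ is a triangulated circle. Because the nerve is a $1$‑sphere, the Davis complex $\Sigma$ of $C(\Gamma)$ is a contractible $2$‑manifold without boundary, hence homeomorphic to $\R^2$, and $C(\Gamma)$ acts on it properly and cocompactly. By Selberg's lemma $C(\Gamma)$ has a torsion‑free finite‑index subgroup $W_0$, which then acts freely, properly and cocompactly on $\R^2$; the quotient is a closed aspherical surface, so $W_0$ is a surface group. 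Surface groups are coherent, and coherence passes from a finite‑index subgroup to the whole group -- if $H\le C(\Gamma)$ is finitely generated then $H\cap W_0$ has finite index in $H$, hence is finitely generated, hence finitely presented, hence so is $H$ -- so $C(\Gamma)$ is coherent. (Equivalently, $C(\Gamma)$ is the cocompact reflection group of an $n$‑gon with vertex angles $\pi/\psi(e)$ in $\mathbb{E}^2$ or $\mathbb{H}^2$.)

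The main obstacle I anticipate is the auxiliary fact in (i): showing that a finitely generated subgroup of a graph of groups over slender edge groups inherits a \emph{finite} graph‑of‑groups decomposition with finitely generated vertex and edge groups. This is where the Noetherian condition on the edge groups is essential, via the standard lemma that the vertex groups of such a finite decomposition of a finitely generated group are finitely generated; the rest of (i) is the combinatorics of chordal graphs and their visual splittings. In (ii) the ingredients -- contractibility of Davis complexes together with the manifold criterion, Selberg's lemma, and the classical coherence of surface groups -- are all standard, so no serious difficulty is expected there.
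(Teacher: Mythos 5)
Your proof is correct, and the two parts should be judged separately. For part (i) you follow essentially the same route as the paper: induction on $\#V$, splitting a disconnected graph as a free product, a connected non-complete chordal graph along a clique separator (this is exactly Dirac's theorem, Proposition \ref{Dirac}), and handling the complete case by slenderness of the clique subgroup; the only difference is that where the paper cites Karrass--Solitar \cite{Karrass} as a black box, you re-derive the needed closure property by hand from Bass--Serre theory (minimal invariant subtree, cocompactness of the action of a finitely generated subgroup, finite generation of vertex groups when edge groups are finitely generated). That derivation is sound, though you should be aware it is exactly the content of the cited Proposition \ref{Karrass}, so it buys generality at the cost of length. For part (ii) your route is genuinely different: the paper simply cuts the cycle into two paths meeting in two non-adjacent vertices, so that $C(\Gamma)=C(\Gamma_1)*_{\Z_2*\Z_2}C(\Gamma_2)$ with coherent path factors (by part (i)) and slender infinite dihedral edge group, and applies Karrass--Solitar again; you instead identify $C(C_n)$, $n\ge 4$, as the reflection group of a Euclidean or hyperbolic $n$-gon (equivalently, via the nerve being a triangulated circle and the Davis complex being $\R^2$), pass to a torsion-free finite-index subgroup by Selberg's lemma, and quote coherence of surface groups together with the fact that coherence ascends along finite-index overgroups. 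Your argument is heavier machinery but yields strictly more information (the group is virtually a closed surface group, hence for instance coherent for structural reasons independent of part (i)); the paper's argument is a two-line corollary of (i) and the amalgam lemma. Both are complete; the one point worth stating explicitly in your version of (ii) is the inequality $\sum_i 1/m_i\le n/2\le n-2$ guaranteeing that the prescribed polygon actually exists in $\mathbb{E}^2$ or $\mathbb{H}^2$ for every $n\ge 4$, or alternatively the Davis-complex manifold criterion you allude to.
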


It is obvious that finite Coxeter groups are slender. Concerning slenderness of infinite Coxeter groups we show:
\begin{NewPropositionC}
Let $C(\Gamma)$ be an infinite Coxeter group. Then $C(\Gamma)$ is slender iff $C(\Gamma)$ decomposes as $C(\Gamma)\cong C(\Gamma_1)\times C(\Gamma_2)$, where $\Gamma_1$, $\Gamma_2$ are induced subgraphs of $\Gamma$ and $C(\Gamma_1)$ is a finite subgroup and $C(\Gamma_2)$ is a finite direct product of irreducible Euclidean reflection groups.
\end{NewPropositionC}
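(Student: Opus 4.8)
The plan is to prove the two implications separately. The reverse implication is a short argument about virtually abelian groups; the forward implication rests on the structure theory of Coxeter groups that contain no non-abelian free subgroup.

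For the implication ``$\Leftarrow$'', I would start from a decomposition $C(\Gamma)\cong C(\Gamma_1)\times C(\Gamma_2)$ with $C(\Gamma_1)$ finite and $C(\Gamma_2)\cong W_1\times\cdots\times W_k$, each $W_i$ an irreducible Euclidean reflection group. Each $W_i$ is virtually free abelian, being the semidirect product $\Z^{n_i}\rtimes W_i^0$ of its translation (coroot) lattice with the associated finite point group; hence the finite product $C(\Gamma_2)$ is virtually $\Z^N$ with $N=\sum n_i$, and therefore so is $C(\Gamma)\cong C(\Gamma_1)\times C(\Gamma_2)$. Fix a finite index subgroup $H\cong\Z^N$ of $C(\Gamma)$. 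For any subgroup $K\le C(\Gamma)$ the intersection $K\cap H$ has finite index in $K$ and is finitely generated (being a subgroup of $\Z^N$), so $K$ is finitely generated by Reidemeister--Schreier. Thus every subgroup of $C(\Gamma)$ is finitely generated, i.e. $C(\Gamma)$ is slender.

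For the implication ``$\Rightarrow$'', suppose $C(\Gamma)$ is slender. Since the commutator subgroup of $F_2$ is not finitely generated, $F_2$ is not slender, so no subgroup of $C(\Gamma)$ is isomorphic to $F_2$. Next I would pass to the irreducible components of the Coxeter system: writing $V=S_1\sqcup\cdots\sqcup S_m$ for the partition of $V$ into the connected components of the Coxeter diagram of $C(\Gamma)$ and letting $\Lambda_i$ be the induced subgraph of $\Gamma$ on $S_i$, one has $C(\Gamma)\cong C(\Lambda_1)\times\cdots\times C(\Lambda_m)$ with each $C(\Lambda_i)$ irreducible; each $C(\Lambda_i)$ is a standard parabolic subgroup, hence slender, hence contains no $F_2$. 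By the strong Tits alternative for subgroups of Coxeter groups (Noskov--Vinberg), a subgroup of a Coxeter group containing no $F_2$ is virtually abelian, so each $C(\Lambda_i)$ is virtually abelian. Finally I would invoke the classification of irreducible Coxeter groups by the type of the Tits bilinear form $B_i$ of $(C(\Lambda_i),S_i)$: if $B_i$ is positive definite, $C(\Lambda_i)$ is finite; if $B_i$ is positive semi-definite and degenerate, $C(\Lambda_i)$ is an irreducible Euclidean reflection group; and if $B_i$ is indefinite, then $C(\Lambda_i)$ is not virtually abelian (the Zariski closure of its Tits representation is not virtually abelian, by Benoist--de la Harpe), contradicting the previous step. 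Hence each $C(\Lambda_i)$ is finite or affine. Grouping the finite factors into $C(\Gamma_1)$ (a finite group, possibly trivial) and the affine factors into $C(\Gamma_2)$ — both standard parabolics, hence each of the form $C(\Lambda)$ for an induced subgraph $\Lambda$ of $\Gamma$ — gives the claimed decomposition; at least one factor is affine since $C(\Gamma)$ is infinite.

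The main obstacle is the Coxeter-theoretic input in the forward direction: the step that an $F_2$-free Coxeter group is virtually abelian, and the step that an infinite irreducible virtually abelian Coxeter group must be affine. Both are available in the literature (the strong Tits alternative of Noskov--Vinberg for the first; the signature classification as in Bourbaki, together with the Zariski-density analysis of Benoist--de la Harpe, for the second). Everything else — the passage to irreducible components and the identification of all the factors as subgroups of the form $C(\Lambda)$ for $\Lambda$ an induced subgraph — is routine Coxeter-group bookkeeping.
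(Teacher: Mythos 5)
Your proposal is correct and follows the same overall strategy as the paper: the backward implication reduces to the fact that irreducible Euclidean reflection groups are finitely generated abelian-by-finite, and the forward implication is the structural statement that a Coxeter group containing no $F_2$ splits as a finite group times a finite product of irreducible Euclidean reflection groups. The difference lies in how the forward direction is justified. The paper cites this structural statement verbatim from \cite[17.2.1]{Davis}, whereas you re-derive it: you pass to the irreducible components, invoke the strong Tits alternative of Noskov--Vinberg to conclude each component is virtually abelian, and then use the signature trichotomy for the Tits form (positive definite, semi-definite degenerate, indefinite) together with the Benoist--de la Harpe Zariski-density argument to exclude the indefinite case. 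This is more self-contained but leans on heavier machinery than the paper needs; both routes are valid. In the backward direction your argument is the same in substance as the paper's Lemma \ref{slender} (slenderness is closed under extensions and finite direct products), just packaged as ``$C(\Gamma)$ is virtually $\Z^N$''; the only quibble is that the final step --- a group containing a finitely generated subgroup of finite index is itself finitely generated --- is not Reidemeister--Schreier (which goes in the opposite direction) but the elementary observation that generators of the subgroup together with a set of coset representatives generate the whole group.
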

Irreducible Euclidean reflection groups were classified in terms of Coxeter diagrams, see \cite{Humphreys}. 

It follows from Droms characterization that the smallest incoherent right angled Artin graph is a cycle of length $4$. Concerning right angled Coxeter groups we show that a smallest right angled Coxeter graph has 6 vertices and 9 edges.

We prove Theorems A and B by induction on the cardinality of the vertex set of $\Gamma$. Our main technique in the proofs of the main theorems is based on the
following result by Karrass and Solitar \cite[Theorem 8]{Karrass}: An amalgam $A*_C B$ where $A, B$ are coherent groups and $C$ is slender is also coherent.

\section{Graphs}
In this section we briefly present the main definitions and properties
concerning simplicial graphs. For more background results see \cite{Diestel}.

A {\it simplicial graph} is a pair $\Gamma=(V,E)$ of sets such that $V\neq\emptyset$ and $E\subseteq V\times V$. The elements of $V$ are called {\it vertices} and the elements of $E$ are its {\it edges}.  If $V'\subseteq V$ and $E'\subseteq E$, then $\Gamma'=(V', E')$ is called a {\it subgraph} of $\Gamma$. If $\Gamma'$ is a subgraph of $\Gamma$ and $E'$ contains all the edges $\left\{v, w\right\}\in E$ with $v, w\in V'$, then $\Gamma'$ is called an {\it induced subgraph} of $\Gamma$. A subgraph $\Gamma'$ is called {\it proper} if $\Gamma'\neq \Gamma$. A {\it path} of length $n$ is a graph $P_n=(V, E)$ of the form  $V=\left\{v_0, \ldots, v_n\right\}$ and $E=\left\{\left\{v_0, v_1\right\}, \left\{v_1, v_2\right\}, \ldots, \left\{v_{n-1}, v_n\right\}\right\}$ where $v_i$ are all distinct. If $P_n=(V,E)$ is a path of length $n\geq 3$, then the graph $C_{n+1}:=(V, E\cup\left\{\left\{v_n, v_0\right\}\right\})$ is called a {\it cycle} of length $n+1$. A graph $\Gamma=(V,E)$ is called {\it connected} if any two vertices $v, w\in V$ are contained in a subgraph $\Gamma'$ of $\Gamma$ such that $\Gamma'$ is a path. A maximal connected subgraph of $\Gamma$ is called a {\it connected component} of $\Gamma$. A graph $\Gamma$ is called a {\it tree} if $\Gamma$ is a connected graph without induced cycle. A graph is called {\it complete} if there is an edge for every pair of distinct vertices. 

Dirac proved in \cite{Dirac} the following result which we will use in the proofs of Theorem A and Theorem B. 
\begin{proposition}
\label{Dirac}
Let $\Gamma$ be a connected non-complete finite simplicial graph. If $\Gamma$ has no induced cycle of length $>3$, then there exist proper induced subgraphs $\Gamma_1$, $\Gamma_2$ with the following properties:
\begin{enumerate}
\item[(i)] $\Gamma=\Gamma_1\cup\Gamma_2$,
\item[(ii)] $\Gamma_1\cap\Gamma_2$ is complete.
\end{enumerate}
\end{proposition}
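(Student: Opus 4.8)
The plan is to obtain the decomposition from a \emph{minimal vertex separator} and then use the hypothesis on long induced cycles to show that this separator spans a complete graph. First I would use that $\Gamma$ is connected but not complete to pick two vertices $a,b\in V$ with $\{a,b\}\notin E$, and then let $S\subseteq V\setminus\{a,b\}$ be a set of minimal cardinality such that $a$ and $b$ lie in different connected components of the induced subgraph on $V\setminus S$. Such an $S$ exists; it is non-empty because connectedness of $\Gamma$ forces at least one vertex to be removed, and it omits $a$ and $b$. Let $A$ be the vertex set of the connected component of $a$ in $\Gamma-S$, and set $B=V\setminus(A\cup S)$, so that $b\in B$. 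Define $\Gamma_1$ to be the subgraph induced on $A\cup S$ and $\Gamma_2$ the subgraph induced on $B\cup S$. Then $\Gamma=\Gamma_1\cup\Gamma_2$, both are proper induced subgraphs (since $b\notin\Gamma_1$ and $a\notin\Gamma_2$), and $\Gamma_1\cap\Gamma_2$ is exactly the subgraph induced on $S$. It therefore remains only to prove that $S$ is complete.

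Next I would record two consequences of the minimality of $S$. First, there is no edge between $A$ and $B$, since these are vertex sets of distinct components of $\Gamma-S$. Second, every $s\in S$ has a neighbour in $A$ and a neighbour in $B$: if $s$ had no neighbour in $A$, say, then already $S\setminus\{s\}$ would separate $a$ from $b$ (the component of $a$ in $\Gamma-(S\setminus\{s\})$ still cannot reach $b$), contradicting minimality of $S$.

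The heart of the argument is then as follows. Suppose for contradiction that $s,t\in S$ are distinct and non-adjacent. Using the first part of the previous paragraph together with connectedness of $A$, choose a path $P_A$ from $s$ to $t$ all of whose interior vertices lie in $A$, of minimal length among such paths; likewise choose a minimal-length path $P_B$ from $s$ to $t$ with all interior vertices in $B$. Since $s\not\sim t$, each of $P_A$, $P_B$ has length at least $2$, and since they are minimal they are induced (a chord would give a shorter path with interior still contained in $A$, resp.\ $B$). The interiors of $P_A$ and $P_B$ are disjoint, so concatenating $P_A$ and $P_B$ yields a cycle $C$ of length $\geq 4$. Any two non-consecutive vertices of $C$ are either both in $P_A$, or both in $P_B$ (no chord, since these paths are induced), or one is an interior vertex of $P_A$ (hence in $A$) and the other an interior vertex of $P_B$ (hence in $B$), in which case they are non-adjacent because there are no $A$--$B$ edges; finally $s$ and $t$ are non-consecutive in $C$ but non-adjacent by assumption. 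Hence $C$ is an induced cycle of length $>3$, contradicting the hypothesis. Therefore $S$ is complete, so $\Gamma_1\cap\Gamma_2$ is complete, which finishes the proof.

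I expect the main obstacle to be the clean verification that the concatenated cycle $C$ is induced: one must carefully enumerate all possible chords and eliminate each, some via the minimality (hence chordlessness) of $P_A$ and $P_B$, some via the absence of edges between $A$ and $B$, and the pair $\{s,t\}$ via the standing assumption $s\not\sim t$. Once this case analysis is set up correctly, the rest is routine.
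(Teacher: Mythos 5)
The paper does not actually prove this proposition: it is quoted from Dirac's 1961 paper on rigid circuit graphs, so there is no internal argument to compare against. Your proposal is the standard proof of Dirac's lemma that every minimal vertex separator of a chordal graph induces a complete subgraph, and the overall strategy (take a minimal $a$--$b$ separator $S$, split $\Gamma$ along $S$, and derive a long induced cycle from a non-edge in $S$ by joining two induced $s$--$t$ paths through the two sides) is correct and is exactly how this result is proved in the literature.

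There is one small but genuine imprecision. You set $B=V\setminus(A\cup S)$, which need not be connected: it is a union of one or more components of $\Gamma-S$, only one of which contains $b$. Your minimality argument, as stated, only gives that each $s\in S$ has \emph{some} neighbour in $B$; if $s$ and $t$ had neighbours only in different components of $B$, the path $P_B$ from $s$ to $t$ with interior in $B$ would not exist and the construction of the cycle $C$ would break down. The repair is the same argument you already used, applied more carefully: let $B_0$ be the component of $b$ in $\Gamma-S$; if some $s\in S$ had no neighbour in $B_0$, then the component of $b$ in $\Gamma-(S\setminus\{s\})$ would still be $B_0$ and would still miss $a$, contradicting minimality of $S$. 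Hence every vertex of $S$ has a neighbour in the \emph{connected} set $B_0$, and $P_B$ should be chosen with interior in $B_0$. With that adjustment, your chord analysis (no chords inside the minimal paths, no $A$--$B$ edges, and $s\not\sim t$) is complete and the contradiction with the absence of induced cycles of length $>3$ goes through.
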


\section{Slender and coherent groups}
In this section we present the main definitions and properties concerning slender and coherent groups. We start with the following definition.
\begin{definition}
\begin{enumerate}
\item[(i)] A group $G$ is said to be slender (or Noetherian) if every subgroup of $G$ is finitely generated.
\item[(ii)] A group $G$ is called coherent if every finitely generated group is finitely presented.
\end{enumerate} 
\end{definition}
One can easily verify from the definition of slenderness and coherence that finite groups are slender and coherent. Further, it follows from the classification of finitely generated abelian groups that these groups are  slender and coherent. A standard example of a group which is not slender is a free group $F_n$ for $n\geq 2$. This follows from the fact that the commutator subgroup of $F_n$ for $n\geq 2$ is not finitely generated. 

Concerning slenderness of graph products we want to remark the following result:
\begin{example}
Let $\Gamma=(V,E)$ be a graph product graph such that $\#\varphi(v)\geq 3$ for all $v\in V$ and $G(\Gamma)$ the corresponding graph product.  Then $G(\Gamma)$ is slender iff $\Gamma$ is a complete graph.
\end{example}
\begin{proof}
If $\Gamma$ is a complete graph, then $G(\Gamma)$ is a finitely generated abelian group and hence slender. Let us assume that $\Gamma=(V, E)$  is not complete. Then there exist vertices $v_1, v_2\in V$ such that $\left\{v_1, v_2\right\}\notin E$. Therefore $\Gamma':=(\left\{v_1, v_2\right\}, \emptyset)$ is an induced subgraph and $G(\Gamma')=\varphi(v_1)*\varphi(v_2)$ is a subgroup of $G(\Gamma)$, see \cite{Green}. Since $\#\varphi(v_1)\geq 3$, it follows that the kernel of the natural map  $\varphi(v_1)*\varphi(v_2)\rightarrow\varphi(v_1)\times\varphi(v_2)$ is a free group of rank $\geq 2$. The free group $F_2$ is not slender, therefore $G(\Gamma')$ and $G(\Gamma)$ are not slender. 
\end{proof}

It is not hard to see that the following result is true.
\begin{lemma}
\label{slender}
Let $1\rightarrow G_1\rightarrow G_2\rightarrow G_3\rightarrow 1$ be a short exact sequence of groups. Then $G_2$ is slender iff $G_1$ and $G_3$ are slender. 

In particular, semidirect products of slender groups are slender and finite direct products of slender groups are slender.
\end{lemma}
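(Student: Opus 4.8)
The plan is to prove the two implications of the equivalence separately and then read off the ``in particular'' clause. Throughout I write $\iota\colon G_1\hookrightarrow G_2$ and $\pi\colon G_2\twoheadrightarrow G_3$ for the maps in the sequence and identify $G_1$ with $\iota(G_1)\trianglelefteq G_2$. For the forward direction, assume $G_2$ is slender. Then $G_1\le G_2$, so every subgroup of $G_1$ is a subgroup of $G_2$, hence finitely generated; thus $G_1$ is slender. For $G_3$, take an arbitrary $H\le G_3$. Since $\pi^{-1}(H)\le G_2$, it is finitely generated, and $\pi$ maps it onto $H$ because $\pi$ is surjective. A homomorphic image of a finitely generated group is finitely generated, so $H$ is finitely generated and $G_3$ is slender.

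For the converse, assume $G_1$ and $G_3$ are slender and take an arbitrary $H\le G_2$. Because $G_1\trianglelefteq G_2$, one has $H\cap G_1\trianglelefteq H$, and the second isomorphism theorem gives a short exact sequence
\[
1\longrightarrow H\cap G_1\longrightarrow H\longrightarrow \pi(H)\longrightarrow 1 .
\]
Now $H\cap G_1\le G_1$ is finitely generated since $G_1$ is slender, and $\pi(H)\le G_3$ is finitely generated since $G_3$ is slender. The only step that requires an argument is the elementary observation that an extension of a finitely generated group by a finitely generated group is itself finitely generated: choosing a finite generating set of $H\cap G_1$ together with lifts to $H$ of a finite generating set of $\pi(H)$ produces a finite generating set of $H$. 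Hence $H$ is finitely generated, and since $H$ was arbitrary, $G_2$ is slender.

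Finally, the ``in particular'' statements follow at once. A semidirect product $N\rtimes Q$ fits into the short exact sequence $1\to N\to N\rtimes Q\to Q\to 1$, so it is slender whenever $N$ and $Q$ are, by the equivalence just proved. A finite direct product is a special case of an iterated semidirect product (with trivial actions), so by induction on the number of factors a finite direct product of slender groups is slender. I do not anticipate any genuine obstacle in this proof; the only mildly non-routine point is the extension-of-finitely-generated-by-finitely-generated fact used in the converse, which is standard and settled by the explicit generating set described above.
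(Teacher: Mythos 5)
Your proof is correct and complete; the paper itself gives no proof of this lemma (it is stated with only the remark ``It is not hard to see that the following result is true''), and your argument is exactly the standard one being left to the reader: subgroups and quotients of slender groups handled via preimages and images under $\pi$, and the converse via the exact sequence $1\to H\cap G_1\to H\to \pi(H)\to 1$ together with the fact that an extension of a finitely generated group by a finitely generated group is finitely generated.
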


Concerning slenderness of Coxeter groups we prove:
\begin{NewPropositionC}
Let $C(\Gamma)$ be an infinite Coxeter group. Then $C(\Gamma)$ is slender iff $C(\Gamma)$ decomposes as $C(\Gamma)\cong C(\Gamma_1)\times C(\Gamma_2)$, where $\Gamma_1$, $\Gamma_2$ are induced subgraphs and $C(\Gamma_1)$ is a finite subgroup and $C(\Gamma_2)$ is a finite direct product of irreducible Euclidean reflection groups.
\end{NewPropositionC}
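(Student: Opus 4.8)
The plan is to route both implications through the standard decomposition of a Coxeter group into its irreducible factors, so I recall it first. Given a Coxeter system $(W,S)$, partition $S$ into the connected components $S_1,\dots,S_k$ of the \emph{non-commutation graph} on $S$, in which $s$ and $t$ are joined precisely when the product $st$ has order $\ge 3$ or infinite order (equivalently, when $s$ and $t$ do not commute in $W$). Then $W=W_{S_1}\times\dots\times W_{S_k}$, each $W_{S_i}$ is an irreducible Coxeter group, and in the present situation $W_{S_i}=C(\Lambda_i)$, where $\Lambda_i$ is the induced subgraph of $\Gamma$ on $S_i$; moreover each $C(\Lambda_i)$ is a standard parabolic subgroup of $C(\Gamma)$, hence a subgroup. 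Two consequences will be used: slenderness of $C(\Gamma)$ passes to every $C(\Lambda_i)$, and the induced subgraph of $\Gamma$ on any union of the sets $S_i$ has irreducible decomposition exactly the corresponding sub-collection of the $C(\Lambda_i)$, which gives enough room to repackage the factors into two groups at the end.

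For the implication ``$\Leftarrow$'' I would argue that $C(\Gamma_1)$ is slender because it is finite, and that an irreducible Euclidean reflection group $W$ is slender because it contains its translation lattice, isomorphic to $\Z^n$ for some $n\ge 1$, as a normal subgroup of finite index: applying Lemma~\ref{slender} to the short exact sequence $1\to\Z^n\to W\to W/\Z^n\to 1$ — with $\Z^n$ slender as a finitely generated abelian group and $W/\Z^n$ slender as a finite group — shows that $W$ is slender. Lemma~\ref{slender} then gives that the finite direct product $C(\Gamma_2)$ is slender, and one more application that $C(\Gamma_1)\times C(\Gamma_2)\cong C(\Gamma)$ is slender.

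For the implication ``$\Rightarrow$'' assume $C(\Gamma)$ is slender and write $C(\Gamma)=\prod_{i=1}^{k}C(\Lambda_i)$ as above. Every $C(\Lambda_i)$ is a subgroup of $C(\Gamma)$, hence slender, so no $C(\Lambda_i)$ contains a copy of $F_2$ (which is not slender, as recorded in Section~3). The one genuinely non-formal ingredient — and the step I expect to be the main obstacle, essentially a matter of locating and citing the right statement — is the classical trichotomy for irreducible Coxeter systems: an irreducible Coxeter group is either finite, or an infinite irreducible Euclidean reflection group, or it contains a non-abelian free subgroup. (In the third case the Tits bilinear form is of indefinite type, so $W$ is not virtually solvable and $F_2$ embeds by the Tits alternative for linear groups; equivalently one may invoke directly the fact that an infinite irreducible non-affine Coxeter group contains $F_2$.) Combined with the previous sentence, each $C(\Lambda_i)$ is finite or an irreducible Euclidean reflection group. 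Taking $\Gamma_1$ and $\Gamma_2$ to be the induced subgraphs of $\Gamma$ on $\bigcup\{S_i : C(\Lambda_i)\text{ finite}\}$ and on $\bigcup\{S_i : C(\Lambda_i)\text{ infinite}\}$ respectively, the first-paragraph remarks give $C(\Gamma)=C(\Gamma_1)\times C(\Gamma_2)$, with $C(\Gamma_1)$ a finite subgroup (a finite product of finite groups; the trivial group if $\Gamma_1=\emptyset$) and $C(\Gamma_2)$ a finite direct product of irreducible Euclidean reflection groups. Since $C(\Gamma)$ is infinite, $\Gamma_2\neq\emptyset$, which completes the plan.
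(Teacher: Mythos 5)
Your proposal is correct and follows essentially the same route as the paper: the reverse implication is the paper's argument verbatim (Euclidean reflection groups are (f.g.\ abelian)-by-finite, then apply Lemma~\ref{slender}), and for the forward implication the trichotomy you isolate as the key non-formal ingredient --- an irreducible Coxeter group is finite, an irreducible Euclidean reflection group, or contains $F_2$ --- is precisely the content of the result \cite[17.2.1]{Davis} that the paper cites as a black box after observing that slenderness excludes $F_2$. Your version merely unpacks that citation via the irreducible decomposition; no gap.
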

\begin{proof}
If $C(\Gamma)$ is slender, then $F_2$ is not a subgroup of $C(\Gamma)$ and it follows by \cite[17.2.1]{Davis} that $C(\Gamma)$ decomposes as $C(\Gamma)\cong C(\Gamma_1)\times C(\Gamma_2)$, where $C(\Gamma_1)$ is a finite subgroup and $C(\Gamma_2)$ is a finite direct product of irreducible Euclidean reflection groups. 
Now, assume that $C(\Gamma)$ has the above decomposition. 
Let $G$ be an irreducible Euclidean reflection group, then $G$ decomposes as semidirect product of a finitely generated abelian group and a finite group, see \cite{Humphreys}. Since finitely generated abelian groups and finite groups are slender it follows by Lemma \ref{slender} that $G$ is slender. Now we know that $C(\Gamma)$ is a direct product of slender groups. It follows again by Lemma \ref{slender} that $C(\Gamma)$ is slender.
\end{proof}

Since $\Z_2*\Z_2$ is the only irreducible right angled Euclidean reflection group we immediate obtain the following corollary of Proposition C: 
\begin{corollary}
Let $G$ be an infinite right angled Coxeter group. Then $G$ is slender iff there exist $n, k\in\mathbb{N}$ such that $G\cong \Z^n_2\times (\Z_2*\Z_2)^k$. 
\end{corollary}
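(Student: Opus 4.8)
The plan is to read off the corollary directly from Proposition~C, using two auxiliary facts: a finite right-angled Coxeter group is a power of $\Z_2$, and $\Z_2*\Z_2$ is the only right-angled irreducible Euclidean reflection group.

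\emph{Sufficiency.} Suppose $G\cong\Z^n_2\times(\Z_2*\Z_2)^k$ (since $G$ is infinite necessarily $k\geq 1$, but this plays no role here). Then $\Z^n_2=C(\Gamma_1)$, where $\Gamma_1$ is the complete graph on $n$ vertices with all edge labels $2$, and $(\Z_2*\Z_2)^k=C(\Gamma_2)$, where $\Gamma_2$ is the disjoint union of $k$ pairs of non-adjacent vertices; thus $G=C(\Gamma_1)\times C(\Gamma_2)$ with $C(\Gamma_1)$ finite and $C(\Gamma_2)$ a finite direct product of copies of the irreducible Euclidean reflection group $\Z_2*\Z_2$. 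By Proposition~C, $G$ is slender. (Alternatively: $\Z_2$ is finite and $\Z_2*\Z_2\cong\Z\rtimes\Z_2$, so both are slender, and a finite direct product of slender groups is slender by Lemma~\ref{slender}.)

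\emph{Necessity.} Let $G=C(\Gamma)$ be infinite, right-angled and slender. By Proposition~C, $G\cong C(\Gamma_1)\times C(\Gamma_2)$, where $\Gamma_1,\Gamma_2$ are induced subgraphs of $\Gamma$, $C(\Gamma_1)$ is finite, and $C(\Gamma_2)$ is a finite direct product $E_1\times\cdots\times E_k$ of irreducible Euclidean reflection groups. Since $\Gamma$ is right-angled and an induced subgraph inherits the edge labeling, both $\Gamma_1$ and $\Gamma_2$ carry only edge labels $2$, so $C(\Gamma_1)$ and $C(\Gamma_2)$ are right-angled. If $\Gamma_1$ had two non-adjacent vertices $u,w$, then the special subgroup $\langle u,w\rangle\cong\Z_2*\Z_2$ would be infinite, contradicting finiteness of $C(\Gamma_1)$; hence $\Gamma_1$ is complete and $C(\Gamma_1)\cong\Z^n_2$ with $n=\#V(\Gamma_1)$. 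Each irreducible factor $E_i$ of $C(\Gamma_2)$ is the Coxeter group on the corresponding sub-Coxeter-graph of $\Gamma_2$, hence right-angled, while by hypothesis $E_i$ is an irreducible Euclidean reflection group; consulting the classification of the latter (diagrams $\widetilde A_m$, $\widetilde B_m$, $\widetilde C_m$, $\widetilde D_m$, $\widetilde E_{6,7,8}$, $\widetilde F_4$, $\widetilde G_2$), the only one all of whose bonds have order $2$ or $\infty$ is $\widetilde A_1$, so $E_i\cong\Z_2*\Z_2$. Therefore $C(\Gamma_2)\cong(\Z_2*\Z_2)^k$ and $G\cong\Z^n_2\times(\Z_2*\Z_2)^k$.

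The only step that is not pure bookkeeping is the appeal to the classification of irreducible Euclidean reflection groups recorded in \cite{Humphreys}, used to isolate $\widetilde A_1$; the remaining ingredients — the direct product decomposition of a slender infinite Coxeter group supplied by Proposition~C, that special subgroups of a right-angled Coxeter group are again right-angled Coxeter groups, and that a finite right-angled Coxeter group is a power of $\Z_2$ — are elementary.
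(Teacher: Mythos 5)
Your proof is correct and follows exactly the route the paper intends: the paper derives this corollary from Proposition~C in one line, citing only the fact that $\Z_2*\Z_2$ is the unique irreducible right-angled Euclidean reflection group, and your write-up is a careful expansion of precisely that argument (plus the easy observation that a finite right-angled Coxeter group is $\Z_2^n$).
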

For example, the corresponding right angled Coxeter groups of the following graphs are slender:

\begin{figure}[h]
\begin{center}
\begin{tikzpicture}
\draw[fill=black]  (0,0) circle (1pt);
\draw[fill=black]  (0,1) circle (1pt);
\draw[fill=black]  (0,2) circle (1pt);
\draw (0,0)--(0,2);
\node at (-0.5,0) {$\Gamma_1$}; 
\draw[fill=black]  (2,0) circle (1pt);
\draw[fill=black]  (1.5, 1) circle (1pt);
\draw[fill=black]  (2.5, 1) circle (1pt);
\draw[fill=black]  (2, 2) circle (1pt);
\draw (2,0)--(1.5,1);
\draw (2,0)--(2.5,1);
\draw (1.5,1)--(2.5,1);
\draw (1.5,1)--(2,2);
\draw (2.5,1)--(2,2);
\node at (1.5,0) {$\Gamma_2$}; 

\draw[fill=black]  (4,0) circle (1pt);
\draw[fill=black]  (3.5, 1) circle (1pt);
\draw[fill=black]  (4.5, 1) circle (1pt);
\draw[fill=black]  (4, 1.5) circle (1pt);
\draw[fill=black]  (4, 2.5) circle (1pt);
\draw (4,0)--(3.5,1);
\draw (4,0)--(4.5,1);
\draw (3.5,1)--(4.5,1);
\draw (3.5,1)--(4,2.5);
\draw (4.5,1)--(4,2.5);

\draw (3.5,1)--(4,1.5);
\draw (4.5,1)--(4,1.5);
\draw (4,1.5)--(4,2.5);
\draw[dashed] (4, 0)--(4,1.5);
\node at (3.5,0) {$\Gamma_3$}; 
\end{tikzpicture}
\end{center}
\end{figure}

\begin{lemma}
\label{slendercoherent}
Let $1\rightarrow G_1\xrightarrow{\iota} G_2\xrightarrow{\pi} G_3\rightarrow 1$ be a short exact sequence of groups. If $G_1$ and $G_2$ are slender and coherent groups, then $G_2$ is slender and coherent. 

In particular, semidirect products of slender and coherent groups are slender and coherent and finite direct products of slender and coherent groups are slender and coherent.
\end{lemma}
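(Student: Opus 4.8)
The plan is to handle slenderness by a direct appeal to Lemma~\ref{slender} and to handle coherence by slicing a finitely generated subgroup of $G_2$ along $\iota(G_1)$. (Here I read the hypothesis as ``$G_1$ and $G_3$ are slender and coherent'', which is what makes the ``in particular'' clause meaningful.) First, since $\iota(G_1)\cong G_1$ and $G_3$ are slender, Lemma~\ref{slender} gives at once that $G_2$ is slender; in particular every subgroup of $G_2$ is finitely generated, a fact used below.

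Now let $H\leq G_2$ be an arbitrary finitely generated subgroup; we must show that $H$ is finitely presented. Restricting the given sequence to $H$ produces a short exact sequence
\[
1\longrightarrow H\cap\iota(G_1)\longrightarrow H\xrightarrow{\pi|_H}\pi(H)\longrightarrow 1 .
\]
The quotient $\pi(H)$ is a finitely generated subgroup of the coherent group $G_3$, hence finitely presented. The kernel $H\cap\iota(G_1)$ is a subgroup of the slender group $\iota(G_1)$, hence finitely generated, and therefore, being a finitely generated subgroup of the coherent group $G_1$, it is finitely presented as well. It then remains to invoke the classical fact (P.\ Hall) that an extension of a finitely presented group by a finitely presented group is finitely presented: from finite presentations of $H\cap\iota(G_1)$ and of $\pi(H)$ one assembles a finite presentation of $H$ using the generators and relators of the kernel, a choice of lifts of the generators together with the relators of the quotient (each relator of the quotient lifting to an element of the kernel, written as a word in its generators), and the finitely many conjugation relations $x\,n\,x^{-1}=(\text{word in the generators of the kernel})$ for $x$ ranging over the lifted quotient generators and $n$ over the finitely many kernel generators. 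Hence $H$ is finitely presented, and $G_2$ is coherent.

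Finally, the ``in particular'' statements are immediate corollaries: a semidirect product $G_1\rtimes G_3$ sits in exactly such a short exact sequence, and a finite direct product of slender and coherent groups is obtained by iterating this one factor at a time. The only step that is not a formal manipulation of the definitions and of Lemma~\ref{slender} is the extension lemma for finite presentability; that result is standard, so I anticipate no genuine obstacle here.
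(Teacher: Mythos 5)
Your proof is correct and follows essentially the same route as the paper: slenderness via Lemma~\ref{slender}, then for a finitely generated subgroup the short exact sequence induced by $\pi$, with the kernel finitely generated by slenderness of $G_1$ and hence finitely presented by coherence, the image finitely presented by coherence of $G_3$, and the extension lemma for finite presentability to conclude. You are also right that the hypothesis should read ``$G_1$ and $G_3$'' rather than ``$G_1$ and $G_2$''; this is a typo in the statement, and your reading is the intended one.
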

\begin{proof}
The slenderness of $G_2$ follows by Lemma \ref{slender}. Let $U$ be a finitely generated subgroup of $G_2$.  We have ${\rm ker}(\pi_{\mid U})\subseteq{\rm im}(G_1)\cong G_1$. The group $G_1$ is slender and coherent, thus  ${\rm ker}(\pi_{\mid U})$ is finitely presented. The quotient $U/{\rm ker}(\pi_{\mid U})$ is isomorphic to the subgroup of $G_3$. We know that $G_3$ is slender and coherent, hence $U/{\rm ker}(\pi_{\mid U})$ is finitely presented. So far we have shown that ${\rm ker}(\pi_{\mid U})$ and $U/{\rm ker}(\pi_{\mid U})$ are finitely presented groups. Now it is not hard to see that $U$ is finitely presented.
\end{proof}
The following consequence of Lemma \ref{slendercoherent} will be need in the proof of Theorem B.
\begin{corollary}
\label{Coxeterslendercoherent}
Let $C(\Gamma)$ be a slender Coxeter group. Then $C(\Gamma)$ is coherent.
\end{corollary}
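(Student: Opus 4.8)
The plan is to deduce this as an immediate consequence of Proposition C together with Lemma \ref{slendercoherent}. First I would split into two cases according to whether $C(\Gamma)$ is finite or infinite. If $C(\Gamma)$ is finite, there is nothing to prove, since finite groups are both slender and coherent. So assume $C(\Gamma)$ is infinite and slender. Then Proposition C applies and yields a decomposition $C(\Gamma)\cong C(\Gamma_1)\times C(\Gamma_2)$, where $C(\Gamma_1)$ is finite and $C(\Gamma_2)$ is a finite direct product of irreducible Euclidean reflection groups.

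Next I would argue that each irreducible Euclidean reflection group $G$ is slender and coherent. As recorded in the proof of Proposition C, such a $G$ is a semidirect product of a finitely generated abelian group by a finite group (see \cite{Humphreys}). Finitely generated abelian groups and finite groups are both slender and coherent, so by the ``in particular'' clause of Lemma \ref{slendercoherent} the semidirect product $G$ is slender and coherent as well.

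Finally I would assemble everything: $C(\Gamma_1)$ is finite, hence slender and coherent; $C(\Gamma_2)$ is a finite direct product of the groups $G$ just handled, hence slender and coherent by the finite-direct-product clause of Lemma \ref{slendercoherent}; and then $C(\Gamma)\cong C(\Gamma_1)\times C(\Gamma_2)$ is a direct product of two slender and coherent groups, so one more application of Lemma \ref{slendercoherent} shows $C(\Gamma)$ is coherent (indeed slender and coherent). I do not anticipate any genuine obstacle here — the only point requiring a little care is that Proposition C is stated for infinite Coxeter groups, so the finite case must be separated off at the start; everything else is bookkeeping on top of the already-established lemmas.
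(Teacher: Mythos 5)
Your proposal is correct and follows exactly the paper's argument: separate off the finite case, invoke Proposition C for the infinite case, handle irreducible Euclidean reflection groups via the semidirect product structure and Lemma \ref{slendercoherent}, and then apply the finite-direct-product clause of that lemma. No differences worth noting.
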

\begin{proof}
If $C(\Gamma)$ is finite, then it is obvious that $C(\Gamma)$ is coherent. Otherwise by Proposition C follows that $C(\Gamma)$ decomposes as a finite direct product of a finite group and irreducible Euclidean reflection groups. Since an irreducible Euclidean reflection group is a semidirect product of a finitely generated abelian group and a finite group it follows by Lemma \ref{slendercoherent} that this group is slender and coherent.
Thus, $C(\Gamma)$ is a finite direct product of slender and coherent groups and by Lemma \ref{slendercoherent} follows that $C(\Gamma)$ is also coherent.
\end{proof}

The crucial argument in the proofs of the main theorems is the following result which was proven by Karrass and Solitar.
\begin{proposition}(\cite[Theorem 8]{Karrass})
\label{Karrass}
Let $A*_C B$ be an amalgam. If $A$ and $B$ are coherent groups and $C$ is slender, then $A*_C B$ is coherent. 

In particular, coherence is preserved under taking free products.
\end{proposition}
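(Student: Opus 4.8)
The plan is to run the standard Bass--Serre argument, as follows. Write $G=A*_C B$ and let $H\le G$ be a finitely generated subgroup; the goal is to show $H$ is finitely presented. First, let $G$ act on the Bass--Serre tree $T$ of the splitting, whose vertices are the cosets $gA$ and $gB$, whose edges are the cosets $gC$, and whose cell stabilisers are the corresponding conjugates of $A$, $B$ and $C$. Fix a finite symmetric generating set $S$ of $H$ and a vertex $v_0\in T$, and put $T_0:=\bigcup_{s\in S}[v_0,sv_0]$ and $T':=H\cdot T_0$. Since $S$ generates $H$ the subtree $T'$ is connected and $H$-invariant, and since $T_0$ is finite, $H$ acts on $T'$ with only finitely many orbits of vertices and of edges; so by Bass--Serre theory (see \cite{Serre}) the group $H$ is the fundamental group of a \emph{finite} graph of groups $(\mathbb{G},Y)$ with $Y=H\backslash T'$, whose vertex groups have the form $H\cap gAg^{-1}$ or $H\cap gBg^{-1}$ and whose edge groups have the form $H\cap gCg^{-1}$.

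Two finiteness inputs then feed into this decomposition. First, since each $gCg^{-1}\cong C$ is slender, every edge group of $\mathbb{G}$ is finitely generated. Second, the vertex groups of $\mathbb{G}$ are finitely generated as well: writing each $s\in S$ as a word in the finitely many vertex groups $H_v$ and the finitely many stable letters $t_e$ of $\mathbb{G}$, only finitely many elements of each $H_v$ occur, so together with finite generating sets of the (finitely many) incident edge groups they span a finitely generated subgroup $H_v^0\le H_v$, and these subgroups together with all the $t_e$ generate $H$. This exhibits $H$ as the fundamental group of the graph of groups $\mathbb{G}^0$ with the same underlying graph and edge groups but the smaller vertex groups $H_v^0$; the inclusion induces a map $\pi_1(\mathbb{G}^0)\to\pi_1(\mathbb{G})=H$ that is both injective and surjective, and --- since a reduced word of positive length over $\mathbb{G}^0$ stays reduced, hence of positive length, over $\mathbb{G}$ --- a comparison of normal forms forces $H_v^0=H_v$ for every $v$.

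At this stage each vertex group $H_v$ is a finitely generated subgroup of a group isomorphic to $A$ or to $B$, hence finitely presented by coherence of $A$ and $B$. What remains is to assemble a finite presentation of $H=\pi_1(\mathbb{G},Y)$: take the finitely many stable letters together with finite presentations of the (finitely many) vertex groups, and for each of the finitely many edges $e$ adjoin the finitely many relations $\iota_{\bar e}(c)=\iota_e(c)$ (respectively $t_e\,\iota_{\bar e}(c)\,t_e^{-1}=\iota_e(c)$) as $c$ runs over a finite generating set of the corresponding edge group; because the edge maps are homomorphisms, these already impose all edge relations. Hence $H$ is finitely presented and $A*_C B$ is coherent. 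The ``in particular'' statement is the special case $C=\{1\}$ (the trivial group being slender), together with an induction on the number of free factors.

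The step I expect to be the real obstacle is the finite generation of the vertex groups $H\cap gAg^{-1}$ and $H\cap gBg^{-1}$: such intersections need not be finitely generated in general, and it is exactly here that one must use simultaneously that $H$ is finitely generated and that the edge groups are finitely generated --- the latter being precisely where slenderness of $C$ enters. Everything else is routine Bass--Serre bookkeeping. Equivalently, one could peel the edges of $Y$ off one at a time, re-expressing $H$ as an iterated amalgamated product and HNN extension over the finitely generated groups $H\cap gCg^{-1}$; that is the form in which Karrass and Solitar originally stated the theorem.
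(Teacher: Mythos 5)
Your argument is correct. Note first that the paper does not prove this statement at all: it is quoted as \cite[Theorem 8]{Karrass} and used as a black box, so there is nothing in the text to compare against except the citation. Your Bass--Serre proof is the standard modern rendering of the Karrass--Solitar theorem (their original argument predates Serre's trees and is phrased via their structure theorem for subgroups of amalgams, essentially the ``peeling off edges'' reformulation you mention at the end). You correctly isolate the one genuinely nontrivial step --- finite generation of the vertex groups $H\cap gAg^{-1}$, $H\cap gBg^{-1}$ --- and your proof of it is sound: the key points are that $H_v^0$ must be taken to contain the images of the full edge groups (which is exactly where slenderness of $C$ is consumed, since otherwise the edge groups of $\mathbb{G}^0$ would have to be shrunk and reduced words over $\mathbb{G}^0$ would no longer stay reduced over $\mathbb{G}$), and that a reduced word of positive syllable length cannot represent an element of a vertex group, which forces $H_v^0=H_v$. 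The remaining steps (connectedness and cofiniteness of $H\cdot T_0$, finite presentability of a finite graph of finitely presented vertex groups over finitely generated edge groups, and the ``in particular'' via $C=\{1\}$) are all routine and handled correctly. One small bonus worth recording: your argument actually proves the more general statement that the fundamental group of any finite graph of groups with coherent vertex groups and slender edge groups is coherent, which subsumes both the amalgam case and the free-product case in one stroke and is the form in which the result is most convenient for the inductive arguments of Theorems A and B.
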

 A direct consequence of the result of Karrass and Solitar is the next corollary:
\begin{corollary}
\label{amalgam}
\begin{enumerate}
\item[(i)]  Let $\Gamma$ be a graph product graph and $\Gamma_1, \Gamma_2$ induced subgraphs such that $\Gamma=\Gamma_1\cup\Gamma_2$. If  $G(\Gamma_1)$ and $G(\Gamma_2)$ are coherent and $G(\Gamma_1\cap\Gamma_2)$ is slender, then $G(\Gamma)$ is coherent.
\item[(ii)] Let $\Gamma$ be a Coxeter graph and $\Gamma_1, \Gamma_2$ induced subgraphs such that $\Gamma=\Gamma_1\cup\Gamma_2$. If  $C(\Gamma_1)$ and $C(\Gamma_2)$ are coherent and $C(\Gamma_1\cap\Gamma_2)$ is slender, then $C(\Gamma)$ is coherent.
\end{enumerate}
\end{corollary}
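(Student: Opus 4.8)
The plan is to deduce both parts directly from Proposition \ref{Karrass} by exhibiting the ambient group as an amalgamated free product whose vertex groups are $G(\Gamma_1),G(\Gamma_2)$ (resp.\ $C(\Gamma_1),C(\Gamma_2)$) and whose edge group is $G(\Gamma_1\cap\Gamma_2)$ (resp.\ $C(\Gamma_1\cap\Gamma_2)$). So the real task is to record the amalgam decomposition; everything else is a one-line application of the Karrass--Solitar result.

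For (i), I would write $\Gamma_i=(V_i,E_i)$ and set $\Gamma_0:=\Gamma_1\cap\Gamma_2=(V_1\cap V_2,\,E_1\cap E_2)$. Since $\Gamma_1,\Gamma_2$ are induced and $\Gamma=\Gamma_1\cup\Gamma_2$, every vertex of $\Gamma$ lies in $V_1$ or in $V_2$, and every edge of $\Gamma$ has both endpoints in $V_1$ or both in $V_2$; in particular $\Gamma_0$ is again an induced subgraph of $\Gamma$ and of each $\Gamma_i$, and $\varphi$ restricts to a vertex labeling on it. Consequently the standard presentation of $G(\Gamma)$ — generators $\bigcup_{v\in V}\varphi(v)$, the defining relations of each $\varphi(v)$, and the commutator relations for edges — is the union of the corresponding presentations of $G(\Gamma_1)$ and $G(\Gamma_2)$, overlapping exactly in the presentation of $G(\Gamma_0)$. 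Using that induced subgraphs determine retract (in particular, embedded) subgroups of a graph product, see \cite{Green}, the maps $G(\Gamma_i)\to G(\Gamma)$ are injective and $G(\Gamma_1)\cap G(\Gamma_2)=G(\Gamma_0)$ inside $G(\Gamma)$, which gives $G(\Gamma)\cong G(\Gamma_1)*_{G(\Gamma_0)}G(\Gamma_2)$. As $G(\Gamma_1),G(\Gamma_2)$ are coherent and $G(\Gamma_0)=G(\Gamma_1\cap\Gamma_2)$ is slender, Proposition \ref{Karrass} yields coherence of $G(\Gamma)$; the case $V_1\cap V_2=\emptyset$ is just a free product and is covered by the same proposition.

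For (ii), the argument is identical with ``graph product'' replaced by ``Coxeter group'' and ``retract subgroup'' by ``parabolic (visual) subgroup'': the subsets $V_1,V_2\subseteq V$ generate parabolic subgroups of $C(\Gamma)$ isomorphic to $C(\Gamma_1),C(\Gamma_2)$ with intersection $C(\Gamma_1\cap\Gamma_2)$, and since $\Gamma=\Gamma_1\cup\Gamma_2$ no edge of $\Gamma$ joins $V_1\setminus V_2$ to $V_2\setminus V_1$, so the Coxeter presentation of $C(\Gamma)$ splits as the amalgam of those of $C(\Gamma_1)$ and $C(\Gamma_2)$ along $C(\Gamma_1\cap\Gamma_2)$. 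Thus $C(\Gamma)\cong C(\Gamma_1)*_{C(\Gamma_1\cap\Gamma_2)}C(\Gamma_2)$ and Proposition \ref{Karrass} applies verbatim.

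I do not expect a substantial obstacle here; the only point that needs care is that $\Gamma_1\cap\Gamma_2$ is a \emph{full} (induced) subgraph, which is exactly what guarantees that $G(\Gamma_1\cap\Gamma_2)$ (resp.\ $C(\Gamma_1\cap\Gamma_2)$) is precisely the intersection of the two factor subgroups, so that the slenderness hypothesis is placed on the correct amalgamated subgroup. Once that is noted, both parts follow immediately from Proposition \ref{Karrass}.
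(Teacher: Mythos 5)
Your proposal is correct and follows the same route as the paper: the paper likewise reads off the splittings $G(\Gamma)=G(\Gamma_1)*_{G(\Gamma_1\cap\Gamma_2)}G(\Gamma_2)$ and $C(\Gamma)=C(\Gamma_1)*_{C(\Gamma_1\cap\Gamma_2)}C(\Gamma_2)$ from the presentations and then applies Proposition \ref{Karrass}. You simply supply more detail (injectivity of the factor inclusions and the fact that the amalgamated subgroup is exactly $G(\Gamma_1\cap\Gamma_2)$), which the paper leaves implicit.
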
 
\begin{proof}
It follows by the presentation of the graph product $G(\Gamma)$ and Coxeter group $C(\Gamma)$ that
\[
G(\Gamma)=G(\Gamma_1)*_{G(\Gamma_1\cap\Gamma_2)} G(\Gamma_2)\text{ and } C(\Gamma)=C(\Gamma_1)*_{C(\Gamma_1\cap\Gamma_2)} C(\Gamma_2).
\]
By Proposition \ref{Karrass} follows that $G(\Gamma)$ and $C(\Gamma)$ are coherent.
\end{proof}

\section{Proof of Theorem A}

We are now ready to prove Theorem A.

\begin{proof}
We prove this result by induction on $\#V=n$.
Suppose that $n=1$. Then $G(\Gamma)$ is a finitely generated abelian group and therefore coherent. 
Now we assume that $n>1$. If $\Gamma$ is not connected, then there exist induced proper disjoint subgraphs $\Gamma_1, \Gamma_2$ with $\Gamma=\Gamma_1\cup \Gamma_2$. It is evident from the presentation of the graph product that $G(\Gamma)=G(\Gamma_1)*G(\Gamma_2)$. By the induction assumption $G(\Gamma_1)$ and $G(\Gamma_2)$ are coherent. By Corollary \ref{amalgam} coherence is preserved under taking free products, hence $G(\Gamma)$ is coherent. If $\Gamma$ is not connected,  then we have the following cases:
\begin{enumerate}
\item If $\Gamma$ is complete, then $G(\Gamma)$ is a finitely generated abelian group and thus coherent.
\item If $\Gamma$ is not complete, then by Proposition \ref{Dirac} there exist proper induced subgraphs $\Gamma_1, \Gamma_2$ with the following properties: $\Gamma=\Gamma_1\cup\Gamma_2$ and $\Gamma_1\cap\Gamma_2$ is complete. It follows that
$G(\Gamma)=G(\Gamma_1)*_{G(\Gamma_1\cap\Gamma_2)} G(\Gamma_2)$. The groups $G(\Gamma_1), G(\Gamma_1)$ are coherent by the induction assumption. The group $G(\Gamma_1\cap\Gamma_2)$ is a finitely generated abelian group and hence slender. By Corollary \ref{amalgam} follows that $G(\Gamma)$ is coherent.
\end{enumerate}
\end{proof}

\section{Proof of Theorem B}

The proof of Theorem B is very similar to the proof of Theorem A.

\begin{proof} $ $\\
\begin{enumerate}
\item[to (i):] The proof is again by induction on $\#V=n$.
Suppose that $n=1$. Then $C(\Gamma)\cong \Z_2$  and therefore coherent. 
Now we assume that $n>1$. If $\Gamma$ is not connected, then there exist induced proper disjoint subgraphs $\Gamma_1, \Gamma_2$ with $\Gamma=\Gamma_1\cup \Gamma_2$. It is evident from the presentation of the Coxeter group that $G(\Gamma)=G(\Gamma_1)*G(\Gamma_2)$. By the induction assumption $C(\Gamma_1)$ and $C(\Gamma_2)$ are coherent. By Corollary \ref{amalgam} coherence is preserved under taking free products, hence $C(\Gamma)$ is coherent. If $\Gamma$ is connected,  then we have the following cases:
\begin{enumerate}
\item If $\Gamma$ is complete, then $C(\Gamma)$ is slender and by Corollary \ref{Coxeterslendercoherent} we know that slender Coxeter groups are coherent.
\item If $\Gamma$ is not complete, then by Proposition \ref{Dirac} there exist proper induced subgraphs $\Gamma_1, \Gamma_2$ with the following properties: $\Gamma=\Gamma_1\cup\Gamma_2$ and $\Gamma_1\cap\Gamma_2$ is complete. It follows, that
$C(\Gamma)=C(\Gamma_1)*_{C(\Gamma_1\cap\Gamma_2)} C(\Gamma_2)$. The groups $C(\Gamma_1), C(\Gamma_1)$ are coherent by the induction assumption and the group $G(\Gamma_1\cap\Gamma_2)$ is slender. By Corollary \ref{amalgam} follows that $C(\Gamma)$ is coherent.
\end{enumerate}
\item[to (ii):] Let $\Gamma$ be a Coxeter graph with a shape of a cycle of length $>3$. Then there exist proper subgraphs $\Gamma_1$ and $\Gamma_2$ with the following properties: $\Gamma_1, \Gamma_2$ are paths, $\Gamma=\Gamma_1\cup\Gamma_2$ and $\Gamma_1\cap\Gamma_2=(\left\{v_i, v_j\right\}, \emptyset)$ where $v_i$ and $v_j$ are disjoint vertices. The Coxeter group $C(\Gamma)$ has the following decomposition:
\[
G(\Gamma)=G(\Gamma_1)*_{G(\Gamma_1\cap\Gamma_2)} G(\Gamma_2)
\]
Since $\Gamma_1, \Gamma_2$ are trees, it follows by Theorem B(i) that $C(\Gamma_1), C(\Gamma_2)$ are coherent groups. Further, $C(\Gamma_1\cap\Gamma_2)$ is isomorphic to $\Z_2*\Z_2$ and therefore slender. By Corollary \ref{amalgam} we obtain that $C(\Gamma)$ is coherent. 
\end{enumerate}
\end{proof}

\section{Small right angled Coxeter groups}
We turn now to the proof of the following result.
\begin{proposition}
Let $\Gamma=(V, E)$ be a right angled Coxeter graph. 
\begin{enumerate}
\item[(i)] If $\# V\leq 5$ or $\#V=6$ and $\#E\leq 8$, then $C(\Gamma)$ is coherent.
\item[(ii)] If $\Gamma$ has the shape of the above graph, 
then the corresponding right angled 

\begin{figure}[h]
\begin{center}
\begin{tikzpicture}[scale=0.5, transform shape]
\draw[fill=black]  (0,0) circle (1pt);
\draw[fill=black]  (0,1) circle (1pt);
\draw[fill=black]  (0,2) circle (1pt);
\draw[fill=black]  (3,0) circle (1pt);
\draw[fill=black]  (3,1) circle (1pt);
\draw[fill=black]  (3,2) circle (1pt);
\draw (0,0)--(3,0);
\draw (0,0)--(3,1);
\draw (0,0)--(3, 2);
\draw (0,1)--(3,0);
\draw (0,1)--(3,1);

\draw (0,1)--(3,2);
\draw (0,2)--(3,0);
\draw (0,2)--(3,1);
\draw (0,2)--(3,2);

\end{tikzpicture}
\end{center}
\caption*{$K_{3,3}$}
\end{figure}
Coxeter group $C(\Gamma)$  is incoherent. 
\end{enumerate}
\end{proposition}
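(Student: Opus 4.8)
The two parts are independent; I would handle part (ii) first, then part (i).

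For part (ii): the graph $K_{3,3}$ is the join of two copies of the edgeless graph on three vertices, so directly from the presentation one reads off $C(K_{3,3})\cong (\Z_2*\Z_2*\Z_2)\times(\Z_2*\Z_2*\Z_2)$ — each generator on one side of the bipartition commutes with each generator on the other, and within a side there are no relations beyond $v^2=1$. It then suffices to find a non-abelian free subgroup of $\Z_2*\Z_2*\Z_2$: then $F_2\times F_2$ embeds into $C(K_{3,3})$, and since $F_2\times F_2$ is incoherent and a finitely generated subgroup of a subgroup of a group $G$ is a finitely generated subgroup of $G$ (so subgroups of coherent groups are coherent), $C(K_{3,3})$ is incoherent. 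To produce the free subgroup I would take $N:=\ker\varepsilon$, where $\varepsilon\colon\Z_2*\Z_2*\Z_2\to\Z_2$ sends each generator to the nontrivial element, so that $N$ has index $2$. Every nontrivial torsion element of $\Z_2*\Z_2*\Z_2$ is conjugate into a vertex group, hence conjugate to one of the three generators, hence sent by $\varepsilon$ to the nontrivial element; thus $N$ is torsion-free, hence free by the Kurosh subgroup theorem, and since $\chi(\Z_2*\Z_2*\Z_2)=3\cdot\tfrac12-2=-\tfrac12$ we get $\chi(N)=2\cdot(-\tfrac12)=-1$, i.e.\ $N\cong F_2$.

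For part (i): the plan is an induction driven by Corollary \ref{amalgam}(ii), exploiting that for a right angled Coxeter graph every complete induced subgraph yields a group $\Z_2^k$ and the edgeless graph on two vertices yields $\Z_2*\Z_2$, so that any induced subgraph on at most two vertices has slender Coxeter group. (Write $\Gamma[S]$ for the subgraph of $\Gamma$ induced on a vertex set $S$.) First I would prove, by induction on $\#V$, that $C(\Gamma)$ is coherent whenever $\#V\le 5$. The cases $\#V\le 3$ are checked by hand — the groups that arise are finite, virtually cyclic, or free products of finite groups, hence coherent by Proposition \ref{Karrass}. For $\#V\in\{4,5\}$ I would distinguish: (a) $\Gamma$ disconnected — then $C(\Gamma)$ is a free product of Coxeter groups on fewer vertices, done by induction and Proposition \ref{Karrass}; (b) $\Gamma$ connected with a vertex $v$ of degree $\le 2$ — then $\Gamma_1:=\Gamma[\{v\}\cup N(v)]$ (at most three vertices) and $\Gamma_2:=\Gamma[V\setminus\{v\}]$ are proper induced subgraphs with $\Gamma=\Gamma_1\cup\Gamma_2$ and $\Gamma_1\cap\Gamma_2=\Gamma[N(v)]$ on at most two vertices, so $C(\Gamma_1\cap\Gamma_2)$ is slender, $C(\Gamma_1)$ and $C(\Gamma_2)$ are coherent by induction, and Corollary \ref{amalgam}(ii) applies; (c) $\Gamma$ connected with minimum degree $\ge 3$ — then, since $\#V\le 5$, the complement of $\Gamma$ has maximum degree $\le 1$, so $\Gamma$ is a complete multipartite graph all of whose parts have size $\le 2$, hence $C(\Gamma)\cong\Z_2^a\times(\Z_2*\Z_2)^b$ is slender by Lemma \ref{slender} and therefore coherent by Corollary \ref{Coxeterslendercoherent}.

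Granting the case $\#V\le 5$, the case $\#V=6$, $\#E\le 8$ is quick: if $\Gamma$ is disconnected, $C(\Gamma)$ is a free product of Coxeter groups on at most five vertices; if $\Gamma$ is connected, then $\sum_{v\in V}\deg v=2\#E\le 16<18=3\cdot\#V$, so some vertex $v$ has degree $\le 2$, and the same decomposition $\Gamma=\Gamma_1\cup\Gamma_2$ with $\Gamma_1=\Gamma[\{v\}\cup N(v)]$ (at most three vertices), $\Gamma_2=\Gamma[V\setminus\{v\}]$ (five vertices) and $\Gamma_1\cap\Gamma_2=\Gamma[N(v)]$ on at most two vertices finishes it via Corollary \ref{amalgam}(ii) and the $\#V\le 5$ case. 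The points requiring care are: in part (ii), the claim that every nontrivial torsion element of $\Z_2*\Z_2*\Z_2$ is conjugate into a vertex group — this is the standard description of torsion in free products — together with the Euler characteristic bookkeeping; and in part (i), verifying in each branch that $\Gamma_1,\Gamma_2$ really are proper induced subgraphs with $\Gamma=\Gamma_1\cup\Gamma_2$ and $C(\Gamma_1\cap\Gamma_2)$ slender, so that both the inductive hypothesis and Corollary \ref{amalgam}(ii) apply. I do not expect a genuine obstacle beyond this bookkeeping: the inequality $2\#E\le 16<18$ is exactly what collapses the six-vertex case to the five-vertex one, and the trichotomy disconnected / low-degree vertex / minimum degree $\ge 3$ is exhaustive.
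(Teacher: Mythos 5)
Your proposal is correct, and for part (i) it takes a genuinely different route from the paper. The paper handles $\#V=5$ by explicitly listing the connected graphs that contain an induced long cycle but are not themselves cycles (four sporadic graphs) and exhibiting an amalgam decomposition for each, and it disposes of the case $\#V=6$, $\#E\le 8$ by appeal to a published table of connected six-vertex graphs with ``similar arguments.'' You replace this enumeration by a uniform trichotomy: disconnected (free product, Proposition \ref{Karrass}), a vertex $v$ of degree $\le 2$ (split as $\Gamma[\{v\}\cup N(v)]\cup\Gamma[V\setminus\{v\}]$ over $\Gamma[N(v)]$, whose Coxeter group is $\Z_2$, $\Z_2^2$ or $\Z_2*\Z_2$, hence slender, so Corollary \ref{amalgam}(ii) applies), or minimum degree $\ge 3$ with $\#V\le 5$ (complement of maximum degree $\le 1$, so $C(\Gamma)\cong\Z_2^a\times(\Z_2*\Z_2)^b$ is slender and Corollary \ref{Coxeterslendercoherent} applies); the inequality $2\#E\le 16<18$ then reduces the six-vertex case to the five-vertex one. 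This buys a self-contained, checkable argument where the paper relies on case inspection and an external table; the paper's version is shorter on the page but leaves more verification to the reader. For part (ii) both arguments are essentially the same: identify $C(K_{3,3})\cong(\Z_2*\Z_2*\Z_2)\times(\Z_2*\Z_2*\Z_2)$ and embed $F_2\times F_2$; you extract $F_2$ as the index-two torsion-free kernel of the sign map with an Euler-characteristic count, while the paper uses the kernel of the surjection onto $\Z_2\times\Z_2\times\Z_2$ (a free group of larger rank). Both are valid; no gaps.
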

\begin{proof}
\begin{enumerate}
\item[to (i):]  If $\#V\leq 4$, then $\Gamma$ has no induced cycle of length $>3$ or $\Gamma$ is a cycle of length $4$. It follows by Theorem  B  that $C(\Gamma)$ is coherent. 

If $\#V=5$, then  (i) $\Gamma$ is not connected or (ii) $\Gamma$ has no cycle of length $> 3$ or (iii) $\Gamma$ is a cycle of length $5$ or (iv) $\Gamma$ has a shape of the one of the following graphs:

\begin{figure}[h]
\begin{center}
\begin{tikzpicture}[scale=0.5, transform shape]

\draw[fill=black]  (1,4) circle (1pt);
\draw[fill=black]  (5,4) circle (1pt);
\draw[fill=black]  (11,4) circle (1pt);
\draw[fill=black]  (14.5,4) circle (1pt);

\draw (0,0)--(2,0);
\draw (4,0)--(6,0);
\draw (8,0)--(10, 0);

\draw (0,2)--(2,2);
\draw (4,2)--(6,2);
\draw (8,2)--(10, 2);
\draw (13,2)--(16, 2);

\draw (0,0)--(0,2);
\draw (2,0)--(2,2);
\draw (4,0)--(4, 2);
\draw (6,0)--(6,2);
\draw (8,0)--(8,2);
\draw (10,0)--(10, 2);

\draw (0,2)--(1,4);
\draw (4,2)--(5,4);
\draw (6,2)--(5, 4);
\draw (8,2)--(11,4);
\draw (10,2)--(11,4);
\draw (10,0)--(11, 4);
\draw (14.5,0)--(13,2);
\draw (14.5,0)--(16,2);
\draw (13,2)--(14.5, 4);
\draw (16,2)--(14.5,4);

\draw[fill=black]  (0,0) circle (1pt);
\draw[fill=black]  (2,0) circle (1pt);
\draw[fill=black]  (4,0) circle (1pt);
\draw[fill=black]  (6,0) circle (1pt);
\draw[fill=black]  (8,0) circle (1pt);
\draw[fill=black]  (10,0) circle (1pt);
\draw[fill=black]  (14.5,0) circle (1pt);

\draw[fill=black]  (0,2) circle (1pt);
\draw[fill=black]  (2,2) circle (1pt);
\draw[fill=black]  (4,2) circle (1pt);
\draw[fill=black]  (6,2) circle (1pt);
\draw[fill=black]  (8,2) circle (1pt);
\draw[fill=black]  (10,2) circle (1pt);
\draw[fill=black]  (13,2) circle (1pt);
\draw[fill=black]  (14.5,2) circle (1pt);
\draw[fill=black]  (16,2) circle (1pt);

\end{tikzpicture}
\end{center}
\end{figure}

In the cases (i), (ii) and (iii) using Corollary \ref{amalgam} and Theorem B follows that $C(\Gamma)$ is coherent. If $\Gamma$ has a shape of the one of the above graphs then it is easy to verify that there exist induced subgraphs $\Gamma_1$ and $\Gamma_2$ such that $C(\Gamma)=C(\Gamma_1)*_{C(\Gamma_1\cap\Gamma_2)} C(\Gamma_2)$  and $C(\Gamma_1), C(\Gamma_2)$ are coherent and $C(\Gamma_1\cap\Gamma_2)$ is slender. By Corollary \ref{amalgam} we obtain that $C(\Gamma)$ is coherent.

If $\#V=6$ and $\#E\leq 8$, it follows with similar arguments as in the case $\#V=5$ that $C(\Gamma)$ is coherent. A table of connected graphs with six vertices is given in \cite{Cvetkovic}.

\item[to (ii):] The corresponding right angled Coxeter group $C(K_{3,3})$ is isomorphic to
\[
(\Z_2*\Z_2*\Z_2)\times(\Z_2*\Z_2*\Z_2).
\]
Since the kernel of the natural map $\Z_2*\Z_2*\Z_2\rightarrow \Z_2\times\Z_2\times\Z_2$ is a free group of rank $\geq 2$ it follows that $F_2\times F_2$ is a subgroup of $C(K_{3,3})$. The product $F_2\times F_2$ is incoherent and hence $C(K_{3,3})$ is incoherent.
\end{enumerate}
\end{proof}

\end{document}